\documentclass[9pt,shortpaper,twoside,web]{IEEEtran}
%


%

%
\usepackage{cite}

%
\ifCLASSINFOpdf
  \usepackage[pdftex]{graphicx}
  \usepackage{epstopdf}
  \usepackage{subfigure}
  \graphicspath{{./}{./figures/}}
\else
\fi
%
%

\usepackage{xcolor}

%
\usepackage{amsmath}
\usepackage{amssymb}

\def\R{\mathbb{R}}
\def\N{\mathbb{N}}

\hyphenation{op-tical net-works semi-conduc-tor}

%


\newtheorem{nnassumption}{\bf Assumption}

\newtheorem{nntheorem}{\bf Theorem}
\newenvironment{theorem}{\begin{nntheorem}\it}{\end{nntheorem}}
\newtheorem{nncorollary}{\bf Corollary}

\newtheorem{nndefinition}{\bf Definition}

\newtheorem{nnproposition}{\bf Proposition}

\newtheorem{nnproblem}{\bf Problem}

\newtheorem{nnlemma}{\bf Lemma}

\newtheorem{nnremark}{\bf Remark}
\newenvironment{remark}{\begin{nnremark} \rm }{\hfill \hspace*{1pt}\hfill $\circ$\end{nnremark}}
\newtheorem{nnexample}{\bf Example}

\newenvironment{proof}{{\bf Proof.}}{\hfill \hspace*{1pt}\hfill $\Box$}

\allowdisplaybreaks


\begin{document}
%
\title{Local Output Feedback Stabilization of a Nonlinear Kuramoto–Sivashinsky equation}
%
%
%

\author{Hugo~Lhachemi
\thanks{Hugo Lhachemi is with Universit{\'e} Paris-Saclay, CNRS, CentraleSup{\'e}lec, Laboratoire des signaux et syst{\`e}mes, 91190, Gif-sur-Yvette, France (e-mail: hugo.lhachemi@centralesupelec.fr).

This work has been partially supported by ANR PIA funding: ANR-20-IDEES-0002.
}
}

%
%

\markboth{}%
{Lhachemi \MakeLowercase{\textit{et al.}}}
%



\maketitle

\begin{abstract}
This paper is concerned with the local output feedback stabilization of a nonlinear Kuramoto–Sivashinsky equation. The control is located at the boundary of the domain while the measurement is selected as a Neumann trace. This choice of system output requires the study of the system trajectories in $H^2$-norm. Moreover, the choice of the actuation/sensing scheme is discussed and adapted in function of the parameters of the plant in order to avoid the possible loss of controllability/observability property of certain eigenvalues of the underlying operator. This leads in certain cases to a multi-input multi-output control design procedure. The adopted control strategy is finite dimensional and relies on spectral reduction methods. We derive sufficient conditions ensuring the local exponential stabilization of the plant. These control design constraints are shown to be feasible provided the order of the controller is selected to be large enough, ensuring that the reported control design procedure is systematic.
\end{abstract}

\begin{IEEEkeywords}
Kuramoto–Sivashinsky PDE, output feedback stabilization, finite-dimensional control, multi-input multi-output.
\end{IEEEkeywords}

%
\IEEEpeerreviewmaketitle

\section{Introduction}\label{sec: Introduction}

This work is concerned with the problem of local exponential stabilization of a Kuramoto-Sivashinsky equation (KSE). This equation describes the diffusive instabilities in a laminar flame front~\cite{sivashinsky1977nonlinear,nicolaenko1986some} and takes the form of a nonlinear parabolic partial differential equation (PDE). The topic of feedback stabilization of linear parabolic PDEs has been intensively studied in the literature~\cite{boskovic2001boundary,liu2003boundary,prieur2018boundary}. Most control design approaches rely either on backstepping transformations~\cite{krstic2008boundary} or spectral reduction methods~\cite{russell1978controllability,coron2004global,coron2006global}. In particular, spectral reduction methods have been reported for the state-feedback stabilization of the linear KSE by means of boundary control in~\cite{cerpa2010null,cerpa2017control} with extensions to delay boundary control in~\cite{guzman2019stabilization}, output feedback in~\cite{katz2021finite}, and the nonlinear KSE in~\cite{al2018linearized,katz2021regional}. The output feedback stabilization of the nonlinear KSE was considered in~\cite{kang2018distributed} using fully distributed inputs (i.e., in-domain actuators covering the whole spatial domain) and multiple sensors. 

This paper addresses the local output feedback stabilization of a nonlinear KSE by means of spectral reduction methods. The reported approach leverages finite-dimensional control strategies as initiated in the pioneer papers~\cite{curtain1982finite,sakawa1983feedback,balas1988finite,harkort2011finite}. We particularly take advantage of a finite-dimensional observer-based controller architecture originally introduced in \cite{sakawa1983feedback} augmented with a LMI based procedure initially reported in~\cite{katz2020constructive}. This procedure was enhanced and generalized in a systematic manner in~\cite{lhachemi2020finite,lhachemi2021nonlinear} for general reaction-diffusion PDEs with Dirichlet/Neumann/Robin boundary control and Dirichlet/Neumann measurement while performing the control design directly with the control input instead of it time derivative (see~\cite{curtain2012introduction} for an introduction to boundary control systems). This type of approach has been shown to be very efficient for the local output feedback stabilization of linear reaction-diffusion PDEs in the presence of a saturation~\cite{lhachemi2021local}, the global stabilization of linear-reaction-diffusion PDEs in the presence of a Lipchitz continuous sector nonlinearity in the application of the boundary control~\cite{lhachemi2021nonlinear}, as well as the global stabilization of semilinear reaction-diffusion PDE with globally Lipchitz nonlinearity~\cite{lhachemi2021global}. While all these approaches have been applied to reaction-diffusion PDEs, they easily extend to the linear KSE under suitable controllability/observability properties~\cite{katz2021finite}. However, the case of the output feedback stabilization of the nonlinear KSE remains open. This is because the corresponding nonlinearity is not globally Lipchitz continuous and involves terms that are unbounded with respect to the norm of the underlying state-space.

In this context, this paper solves the problem of local output feedback stabilization of the nonlinear KSE. The system output is selected as a Neumann trace. Compared to the studies cited in the previous paragraph which solely dealt with the $H^1$-norm, this system output requires for well-posedness considerations and in the context of this paper the study of the stability of the PDE trajectories in $H^2$-norm. The subsequent sufficient LMI stability conditions are shown to be always feasible when selecting the order of the observer large enough. Finally, it is worth being noted that, depending upon the considered actuation/sensing setting and the value of a certain parameter of the KSE, certain unstable eigenvalues of the plant may be uncontrollable or/and unobservable~\cite{cerpa2010null,cerpa2017control}. In order to circumvent this pitfall, we discuss choices of the actuation and sensing scheme in order to successfully address all the possible configurations for the parameters of the plant. In the most stringent configurations, this requires the introduction of 2 scalar control inputs and 2 scalar outputs, yielding a multi-input multi-output (MIMO) control design problem. This is the first time that the systematic control design procedures reported in~\cite{lhachemi2020finite,lhachemi2021nonlinear} are extended to a MIMO scenario.

The paper is organized as follows. The control design for the nonlinear KSE by means of Dirichlet boundary control and Neumann boundary measurement is addressed in Section~\ref{sec2}. Since this actuation/sensing scheme fails to address a countable number of configurations for the parameters of the plant, we discuss in Section~\ref{sec3} how to circumvent this issue by considering alternative actuation/sensing schemes. A numerical illustration is reported in Section~\ref{sec4}. Finally, concluding remarks are formulated in Section~\ref{sec: conclusion}.

\textbf{Notation.}
Real spaces $\R^n$ are equipped with the usual Euclidean norm denoted by $\Vert\cdot\Vert$. The associated induced norms of matrices are also denoted by $\Vert\cdot\Vert$. For any two vectors $X$ and $Y$, $ \mathrm{col} (X,Y)$ represents the vector $[X^\top,Y^\top]^\top$. The space of square integrable functions on $(0,1)$ is denoted by $L^2(0,1)$ and is endowed with the inner product $\langle f , g \rangle = \int_0^1 f(x) g(x) \,\mathrm{d}x$. The associated norm is denoted by $\Vert \cdot \Vert_{L^2}$. For an integer $m \geq 1$, $H^m(0,1)$ stands for the $m$-order Sobolev space and is endowed with its usual norm $\Vert \cdot \Vert_{H^m}$. For any symmetric matrix $P \in\R^{n \times n}$, $P \succeq 0$ (resp. $P \succ 0$) indicates that $P$ is positive semi-definite (resp. positive definite).

\section{Control design for Kuramoto-Sivashinsky equation with Dirichlet boundary actuation}\label{sec2}

\subsection{Problem description and preliminary spectral reduction}

We consider in this paper the local output feedback stabilization of the KSE described by
\begin{subequations}\label{eq: RD system 1}
\begin{align}
& z_t + z_{xxxx} + \lambda z_{xx} + \mu z z_x = 0 \label{eq: RD system 1 - 1} \\
& z(t,0) = u(t) \label{eq: RD system 1 - 2} \\
& z(t,1) = z_{xx}(t,0) = z_{xx}(t,1) = 0 \label{eq: RD system 1 - 3} \\
& z(0,x) = z_0(x) . \label{eq: RD system 1 - 4}
\end{align}
\end{subequations}
for $t > 0$ and $x \in(0,1)$ where $\lambda > 0$ and $\mu\in\R$. Here we have $z(t,\cdot) \in L^2(0,1)$ the state, $z_0 \in L^2(0,1)$ the initial condition, and $u(t) \in\R$ the boundary control input. The system output is selected as the right Neumann trace:
\begin{equation}\label{eq: measurement operator}
y(t) = z_x(t,1) .
\end{equation}
Note that the output feedback stabilization of (\ref{eq: RD system 1}) in the linear case ($\mu = 0$) and with Dirichlet pointwise measurement was addressed in~\cite{katz2021finite} while the state-feedback of the nonlinear equation ($\mu \neq 0$) was reported in~\cite{katz2021regional}, both for PDE trajectories evaluated in $H^1$-norm.

We first introduce an homogeneous representation of (\ref{eq: RD system 1}); see~\cite{curtain2012introduction} for generalities on boundary control systems. To do so, consider the change of variable formula:
\begin{equation}\label{eq: change of variable}
w(t,x) = z(t,x) - (1-x) u(t) .
\end{equation}
This implies that
\begin{subequations}\label{eq: homogeneous RD system 1}
\begin{align}
& w_t = -w_{xxxx} - \lambda w_{xx} + b \dot{u} + r \label{eq: homogeneous RD system 1 - 1} \\
& w(t,0) = w(t,1) = w_{xx}(t,0) = w_{xx}(t,1) = 0 \label{eq: homogeneous RD system 1 - 2} \\
& w(0,x) = w_0(x) . \label{eq: homogeneous RD system 1 - 3}
\end{align}
\end{subequations}
where $b(x) = -(1-x)$, $r(t,x) = -\mu z(t,x) z_x(t,x) = - \mu ( w(t,x) - b(x) u(t) ) ( w_x(t,x) - u(t) )$, and $w_0(x) = z_0(x) - (1-x) u(0)$. Moreover, we have
\begin{equation}\label{eq: homogeneous measurement operator}
\tilde{y}(t) = w_x(t,1) = y(t) + u(t) .
\end{equation}

Let the unbounded operator $\mathcal{A} f = - f'''' - \lambda f''$ be defined on $D(\mathcal{A}) = \{ f \in H^4(0,1) \,:\, f(0)=f(1)=f''(0)=f''(1)=0 \}$. Hence (\ref{eq: homogeneous RD system 1}) can be written in abstract form as
\begin{subequations}\label{eq: abstract homogeneous RD system 1}
\begin{align}
& w_t = \mathcal{A}w + b \dot{u} + r \label{eq: abstract homogeneous RD system 1 - 1} \\
& w(0,x) = w_0(x) . \label{eq: abstract homogeneous RD system 1 - 2}
\end{align}
\end{subequations}
Introducing 
\begin{equation}\label{eq: definition phi_n}
\phi_n(x) = \sqrt{2}\sin(n\pi x) , \quad n \geq 1
\end{equation}
it is well-known that $\{ \phi_n \,:\, n \geq 1\}$ forms a Hilbert basis of $L^2(0,1)$. For any $f \in L^2(0,1)$ and any integer $N \geq 0$, we define $\mathcal{R}_N f = \sum_{n \geq N+1} \left< f,\phi_n \right> \phi_n$ hence $\Vert \mathcal{R}_N f \Vert_{L^2}^2 = \sum_{n \geq N+1} \left< f,\phi_n \right>^2$. It can be seen that $\mathcal{A}$ is self-adjoint and 
$$\mathcal{A}\phi_n = \sigma_n \phi_n , \quad \sigma_n = - n^4 \pi^4 + \lambda n^2 \pi^2 , \quad n \geq 1 .$$ 
Using arguments similar to \cite[Thm.~2.3.5]{curtain2012introduction} in the case where a finite number of eigenvalues may have a finite multiplicity greater than one, we deduce that $D(\mathcal{A}) = \{ f \in L^2(0,1) \,:\, \sum_{n \geq 1} \sigma_n^2 \left< f , \phi_n \right>^2 < \infty \} = \{ f \in L^2(0,1) \,:\, \sum_{n \geq 1} n^8 \left< f , \phi_n \right>^2 < \infty \}$. Moreover $\mathcal{A} f = \sum_{n \geq 1} \sigma_n \left< f , \phi_n \right> \phi_n$ for all $f \in D(\mathcal{A})$.

\begin{remark}\label{rmk: series expansion}
Owing to the definition of $\phi_n$ given by (\ref{eq: definition phi_n}), it is well known from the properties of Sturm-Liouville operators~\cite{renardy2006introduction} that $f = \sum_{n \geq 1} \left< f , \phi_n \right> \phi_n$ in $H^2$-norm for any $f \in H_0^1(0,1) \cap H^2(0,1)$. First, this implies that $f' = \sum_{n \geq 1} n\pi \left< f , \phi_n \right> \sqrt{2} \cos(n\pi\cdot)$ and $f'' = - \sum_{n \geq 1} n^2 \pi^2 \left< f , \phi_n \right> \sqrt{2} \sin(n\pi\cdot)$, both in $L^2$-norm. Therefore, we obtain that 
\begin{equation}\label{eq: H1 norm in function of the modes}
\Vert f' \Vert_{L^2}^2 = \sum_{n \geq 1} n^2 \pi^2 \left< f , \phi_n \right>^2 
, \quad 
\Vert f'' \Vert_{L^2}^2 = \sum_{n \geq 1} n^4 \pi^4 \left< f , \phi_n \right>^2
\end{equation}
for all $f \in H_0^1(0,1) \cap H^2(0,1)$. Second, from the continuous embedding $H^1(0,1) \subset L^\infty(0,1)$, we have $f(x) = \sum_{n \geq 1} \left< f , \phi_n \right> \phi_n(x)$ and $f'(x) = \sum_{n \geq 1} \left< f , \phi_n \right> \phi_n'(x)$ for all $x \in [0,1]$ and all $f \in H_0^1(0,1) \cap H^2(0,1)$.
\end{remark}

Introducing now the Fourier coefficients defined by $z_n(t) = \left< z(t,\cdot), \phi_n \right>$, $w_n(t) = \left< w(t,\cdot) , \phi_n \right>$, $b_n = \left< b , \phi_n \right> = - \frac{\sqrt{2}}{n\pi}$, and $r_n(t) = \left< r(t,\cdot) , \phi_n \right>$, the projection of (\ref{eq: abstract homogeneous RD system 1}) into $(\phi_n)_{n \geq 1}$ gives 
\begin{equation}\label{eq: RD system 1 spectral reduction 1}
\dot{w}_n (t)= \sigma_n w_n(t) + b_n \dot{u}(t) + r_n(t) .
\end{equation}
Moreover, from the change of variable formula (\ref{eq: change of variable}) we have that
\begin{equation}\label{eq: projected change of variable}
w_n(t) = z_n(t) + b_n u(t) .
\end{equation}
Hence, the combination of the two latter equations gives:
\begin{equation}\label{eq: RD system 1 spectral reduction z coordinate}
\dot{z}_n (t)= \sigma_n z_n(t) + \beta_n u(t) + r_n(t) .
\end{equation}
with 
$\beta_n = \sigma_n b_n = -\sqrt{2} n\pi ( -n^2\pi^2 + \lambda )$. 
Finally, considering the concept of classical solutions for which $w(t,\cdot) \in D(\mathcal{A})$, the system output (\ref{eq: homogeneous measurement operator}) is expressed as the series expansion (see Remark~\ref{rmk: series expansion}):
\begin{equation}\label{eq: RD system 1 spectral reduction 3}
\tilde{y}(t) = \sum_{n \geq 1} c_n w_n(t) 
\end{equation}
with $c_n = \phi_n'(1)$.

\subsection{Control strategy}

The control architecture adopted in this paper appeared in~\cite{lhachemi2021nonlinear} for reaction-diffusion PDEs and is widely inspired by the pioneer work~\cite{sakawa1983feedback}. We fix $\delta > 0$ the desired exponential decay rate for the closed-loop system trajectories. Let an integer $N_0 \geq 1$ be fixed such that $\sigma_n < - \delta < 0$ for all $n \geq N_0 + 1$. Introducing an arbitrary integer $N \geq N_0 + 1$, that will be specified later, the control strategy is described by the following dynamics: 
\begin{subequations}\label{eq: controller 2}
\begin{align}
\hat{w}_n(t) & = \hat{z}_n(t) + b_n u(t) ,\quad 1 \leq n \leq N \label{eq: controller 2 - 0} \\
\dot{\hat{z}}_n(t) & = \sigma_n \hat{z}_n(t) + \beta_n u(t) \nonumber \\
& \phantom{=}\; - l_n \left\{ \sum_{k = 1}^N c_k \hat{w}_k(t) - \tilde{y}(t) \right\} ,\quad 1 \leq n \leq N_0 \label{eq: controller 2 - 1} \\
\dot{\hat{z}}_n(t) & = \sigma_n \hat{z}_n(t) + \beta_n u(t) ,\quad N_0+1 \leq n \leq N \label{eq: controller 2 - 2} \\
u(t) & = \sum_{n=1}^{N_0} k_n \hat{z}_n(t) \label{eq: controller 2 - 3}
\end{align}
\end{subequations}
where $l_n,k_n \in\R$ are the observer and feedback gains, respectively. 

\begin{remark}\label{rmk: well-posedness}
For any initial condition $w_0 \in D(\mathcal{A})$ and any $\hat{z}_n(0) \in\R$, the well-posedness of the closed-loop system composed of (\ref{eq: RD system 1}), rewritten under the homogeneous representation (\ref{eq: homogeneous RD system 1}), and (\ref{eq: controller 2}) is a straightforward consequence of the application of \cite[Thm.~6.3.1]{pazy2012semigroups}. Moreover, owing to the measured output (\ref{eq: RD system 1 spectral reduction 3}) written in homogeneous coordinates and from the argument of \cite[Thm.~6.3.3]{pazy2012semigroups}, the system trajectories will be defined for all $t \geq 0$ if it can be shown that $ \Vert w_{xx}(t,\cdot) \Vert_{L^2}^2 = \sum_{n \geq 1} n^4 \pi^4 w_n(t)^2$ remains bounded on any time interval of finite length.
\end{remark}

\subsection{Reduced order model for stability analysis}

The statement of the main result and the associated proof requires the introduction of a reduced order model that captures the $N$ first modes of the PDE (\ref{eq: RD system 1}), as well as the dynamics of the controller (\ref{eq: controller 2}). We define the scaled estimation $\tilde{z}_n = \hat{z}_n / n^4$, the error of observation $e_n = z_n - \hat{z}_n$, the scaled error of observation $\tilde{e}_n = n^2 e_n$, and the vectors 
$\hat{Z}^{N_0} = \begin{bmatrix} \hat{z}_1 & \ldots & \hat{z}_{N_0} \end{bmatrix}^\top$, 
$E^{N_0} = \begin{bmatrix} e_1 & \ldots & e_{N_0} \end{bmatrix}^\top$, $\tilde{Z}^{N-N_0} = \begin{bmatrix} \tilde{z}_{N_0+1} & \ldots & \tilde{z}_{N} \end{bmatrix}^\top$,
$\tilde{E}^{N-N_0} = \begin{bmatrix} \tilde{e}_{N_0 +1} & \ldots & \tilde{e}_{N} \end{bmatrix}^\top$, 
$R_1 = \begin{bmatrix} r_1 & \ldots & r_{N_0} \end{bmatrix}^\top$,
$\tilde{R}_2 = \begin{bmatrix} (N_0+1)^2 r_{N_0 +1} & \ldots & N^2 r_{N} \end{bmatrix}^\top$, 
$R = \mathrm{col}(R_1,\tilde{R}_2)$.
We also introduce the matrices defined by 
$A_0 = \mathrm{diag}(\sigma_1,\ldots,\sigma_{N_0})$,
$A_1 = \mathrm{diag}(\sigma_{N_0+1},\ldots,\sigma_N)$,
$\mathfrak{B}_0 = \begin{bmatrix} \beta_1 & \ldots & \beta_{N_0} \end{bmatrix}^\top$,
$\tilde{\mathfrak{B}}_1 = \begin{bmatrix} \frac{\beta_{N_0+1}}{(N_0+1)^4} & \ldots & \frac{\beta_N}{N^4} \end{bmatrix}^\top$,
$C_0 = \begin{bmatrix} c_1 & \ldots & c_{N_0} \end{bmatrix}$, 
$\tilde{C}_1 = \begin{bmatrix} \frac{c_{N_0+1}}{(N_0+1)^2} & \ldots & \frac{c_{N}}{N^2} \end{bmatrix}$, 
$K = \begin{bmatrix} k_1 & \ldots & k_{N_0} \end{bmatrix}$,
$L = \begin{bmatrix} l_1 & \ldots & l_{N_0} \end{bmatrix}^\top$.
Similarly to \cite{lhachemi2020finite,lhachemi2021nonlinear}, the benefit of introducing the scaled quantities $\tilde{z}_n$ and $\tilde{e}_n$ is to ensure that $\Vert \tilde{C}_1 \Vert = O(1)$ and $\Vert \tilde{\mathfrak{B}}_1 \Vert = O(1)$ as $N \rightarrow + \infty$; this will be key in the proof of the theorems to ensure the systematic nature of the procedure. With these definitions, we deduce from (\ref{eq: RD system 1 spectral reduction 1}-\ref{eq: controller 2}) that
\begin{align*}
\dot{\hat{Z}}^{N_0} & = (A_0 + \mathfrak{B}_0 K) \hat{Z}^{N_0} + LC_0 E^{N_0} + L \tilde{C}_1 \tilde{E}^{N-N_0} + L \zeta \\
\dot{E}^{N_0} & = (A_0 - LC_0) E^{N_0} - L \tilde{C}_1 \tilde{E}^{N-N_0} - L \zeta + R_1 \\
\dot{\tilde{Z}}^{N-N_0} & = A_1 \tilde{Z}^{N-N_0} + \tilde{\mathfrak{B}}_1 K \hat{Z}^{N_0} \\
\dot{\tilde{E}}^{N-N_0} & = A_1 \tilde{E}^{N-N_0} + \tilde{R}_2 \\
u & = K \hat{Z}^{N_0} .
\end{align*}
where 
\begin{equation}\label{eq: residue of measurement}
\zeta = \sum_{n \geq N+1} \phi_n'(1) w_n
\end{equation} 
stands for the residue of measurement. Defining the state vector
\begin{equation*}
X = \mathrm{col}\left( \hat{Z}^{N_0} , E^{N_0} , \tilde{Z}^{N-N_0} , \tilde{E}^{N-N_0} \right) ,
\end{equation*}
we obtain the reduced order model described by
\begin{equation}\label{eq: RD system 2 dynamics truncated model}
\dot{X} = F X + \mathcal{L} \zeta + G R
\end{equation}
where
\begin{equation*}
F = \begin{bmatrix}
A_0 + \mathfrak{B}_0 K & LC_0 & 0 & L\tilde{C}_1 \\
0 & A_0 - L C_0 & 0 & - L\tilde{C}_1 \\
\tilde{\mathfrak{B}}_1 K & 0 & A_1 & 0 \\
0 & 0 & 0 & A_1
\end{bmatrix}
, \quad 
G = \begin{bmatrix}
0 & 0 \\ I & 0 \\ 0 & 0 \\ 0 & I
\end{bmatrix} 
\end{equation*}
and $\mathcal{L} = \mathrm{col}(L,-L,0,0)$. Introducing finally the augmented vector $\tilde{X} = \mathrm{col}(X,\zeta,R)$, the control input is given by 
\begin{equation}\label{eq: u in function of X}
u = \tilde{K} X , \qquad \dot{u} = K \dot{\hat{Z}}^{N_0} = E\, \mathrm{col}(X,\zeta) = \tilde{E} \tilde{X} .
\end{equation}
where $\tilde{K} = \begin{bmatrix} K & 0 & 0 & 0 \end{bmatrix}$, $E = K \begin{bmatrix} A_0 + \mathfrak{B}_0 K & LC_0 & 0 & L\tilde{C}_1 & L \end{bmatrix}$, and $\tilde{E} = K \begin{bmatrix} A_0 + \mathfrak{B}_0 K & LC_0 & 0 & L\tilde{C}_1 & L & 0 \end{bmatrix} = K \begin{bmatrix} E & 0 \end{bmatrix}$.

\begin{remark}\label{rmk: Kalman condition}
Since $A_0$ is diagonal, the pair $(A_0,\mathfrak{B}_0)$ satisfies the Kalman condition if and only if $\beta_n \neq 0$ for all $1 \leq n \leq N_0$ and for any $1 \leq n \neq m \leq N_0$ we have $\sigma_n \neq \sigma_m$. Since $\beta_n = \sigma_n b_n$ with $b_n = - \frac{\sqrt{2}}{n\pi} \neq 0$, the former condition is equivalent to $\sigma_n \neq 0$, i.e., there does not exist an integer $n \geq 1$ so that $\lambda = n^2 \pi^2$. Moreover, the existence of integers $1 \leq n < m$ so that $\sigma_n = \sigma_m$ is equivalent to the existence of integers $1 \leq n < m$ so that $\lambda = (n^2+m^2)\pi^2$. In that case $\sigma_n = \sigma_m = n^2 m^2 \pi^4 > 0$, hence we necessarily have $1 \leq n < m \leq N_0$, by definition of the integer $N_0$. For $\lambda > 0$, this shows that the pair $(A_0,\mathfrak{B}_0)$ satisfies the Kalman condition if and only if $\lambda \notin \Lambda = \Lambda_1 \cup \Lambda_2$ where 
\begin{align*}
\Lambda_1 & = \{ n^2 \pi^2 \,:\, n\in\N^* \} , \\
\Lambda_2 & = \{ (n^2+m^2)\pi^2 \,:\, n,m\in\N^*,\,  n < m \} .
\end{align*}
Similarly, the pair $(A_0,C_0)$ satisfies the Kalman condition if and only if $\phi_n'(1) \neq 0$ for all $1 \leq n \leq N_0$ and for any $1 \leq n \neq m \leq N_0$ we have $\sigma_n \neq \sigma_m$. Since $\phi_n'(1) = (-1)^n \sqrt{2} n \pi \neq 0$, the pair $(A_0,C_0)$ satisfies the Kalman condition if and only if $\lambda \notin \Lambda_2$. 
\end{remark}

\subsection{Local stability assessment}

The case $\lambda\notin\Lambda$ allows to fix for good the gains $K\in\R^{1 \times N_0}$ and $L\in\R^{N_0}$ such that $A_0 + \mathfrak{B}_0 K$ and $A_0 - L C_0$ are Hurwitz with eigenvalues having a real part strictly less than $-\delta<0$. It remains to fix the dimension $N \geq N_0+1$ of the observer. This can be done using the following result.

\begin{theorem}\label{thm1}
Let $\lambda > 0$ with $\lambda \notin \Lambda$ and $\mu \in\R$. Let $\delta > 0$ and $N_0 \geq 1$ be such that $\sigma_n < - \delta$ for all $n \geq N_0 + 1$. Let $K\in\R^{1 \times N_0}$ and $L\in\R^{N_0}$ be such that $A_0 + \mathfrak{B}_0 K$ and $A_0 - L C_0$ are Hurwitz with eigenvalues that have a real part strictly less than $-\delta<0$. For a given $N \geq N_0 +1$, assume that there exist a symmetric positive definite $P\in\R^{2N \times 2N}$ and positive real numbers $\alpha > 1/\pi^4$ and $\beta,\gamma > 0$ such that 
\begin{equation}\label{eq: thm1 constraints}
\Theta_1(\delta) \prec 0 , \quad
\Theta_2(\delta) < 0 , \quad
\Theta_3 \geq 0
\end{equation}
where
\begin{align*}
\Theta_1(\delta) & = \begin{bmatrix}
F^\top P + P F + 2 \delta P & P \mathcal{L}  \\
\mathcal{L}^\top P & -\beta
\end{bmatrix} 
+ \alpha\gamma \Vert \mathcal{R}_N b \Vert_{L^2}^2 E^\top E \\
\Theta_2(\delta) & = 2 \gamma \left( \sigma_{N+1} + \delta + \frac{(N+1)^4}{\alpha} \right) + \beta M_\phi \\
\Theta_3 & = (N+1)^2 \left( \pi^4 - \frac{1}{\alpha} \right) - \lambda \pi^2 
\end{align*}
where $M_\phi = 2\pi^2 \sum_{n \geq N+1} \frac{1}{n^2} = 2\pi^2 \left( \frac{\pi^2}{6} - \sum_{n=1}^N \frac{1}{n^2} \right)$. Then, considering the closed-loop system composed of the plant (\ref{eq: RD system 1}) with the system output (\ref{eq: measurement operator}) and the controller (\ref{eq: controller 2}), there exist $C,M > 0$ such that for any initial conditions $z_0 \in D(\mathcal{A})$ with $\Vert z_0 \Vert_{H^2} < C$, the system trajectory with null initial condition of the observer ($\hat{z}_n(0) = 0$ for all $1 \leq n \leq N$) satisfies 
\begin{equation}\label{eq: thm1 stability estimate}
\Vert z(t,\cdot)\Vert_{H^2}^2 + \sum_{n=1}^N \hat{z}_n(t)^2 \leq M e^{-2\delta t} \Vert z_0 \Vert_{H^2}^2
\end{equation} 
for all $t \geq 0$. Moreover, the constraints (\ref{eq: thm1 constraints}) are feasible when $N$ is selected large enough.
\end{theorem}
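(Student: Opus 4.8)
The plan is to construct a Lyapunov functional on the state of the reduced-order model (\ref{eq: RD system 2 dynamics truncated model}) coupled with the $H^2$-level tail $\sum_{n\geq N+1} n^4 w_n^2$ of the PDE, to show that it decays at rate $2\delta$ on a small ball, and to transfer the estimate back to $\|z\|_{H^2}$. Concretely I would set $V = X^\top P X + \gamma \sum_{n\geq N+1} n^4 w_n^2$. Using Remark~\ref{rmk: series expansion}, the change of variable (\ref{eq: change of variable}) with $b$ smooth, and the scalings defining $X$, one checks that $V$ is equivalent, up to multiplicative constants depending on $N$, to $\|z\|_{H^2}^2 + \sum_{n=1}^N \hat z_n^2$; moreover, with $\hat z_n(0)=0$ one has $u(0)=0$, hence $z_0 \in H_0^1(0,1)\cap H^2(0,1)$ and $V(0)\leq C\|z_0\|_{H^2}^2$. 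Well-posedness and globality I would obtain by a bootstrap: along the maximal classical solution on $[0,T_{\max})$, the dissipation estimate below shows that once $V(0)$ is small enough $V$ stays below its initial value, so $\sum_{n\geq 1} n^4\pi^4 w_n(t)^2$ remains bounded on $[0,T_{\max})$ and Remark~\ref{rmk: well-posedness} gives $T_{\max}=+\infty$.

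To obtain the dissipation estimate, I would differentiate $V$ along (\ref{eq: RD system 1 spectral reduction 1}) and (\ref{eq: RD system 2 dynamics truncated model}). The contribution $X^\top(F^\top P + PF + 2\delta P)X + 2X^\top P\mathcal L\zeta$ is treated with Young's inequality $2X^\top P\mathcal L\zeta \leq \beta^{-1}X^\top P\mathcal L\mathcal L^\top P X + \beta\zeta^2$, together with $\zeta^2 \leq M_\phi \sum_{n\geq N+1} n^4 w_n^2$, which follows from Cauchy--Schwarz and $\phi_n'(1)^2 = 2n^2\pi^2$ — this is where $M_\phi$ and $\beta$ enter. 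The coupling $2\gamma\dot u\sum_{n\geq N+1} n^4 b_n w_n$ between the tail and $\dot u = E\,\mathrm{col}(X,\zeta)$ is split by a Young's inequality with weight $\alpha$, producing the term $\alpha\gamma\|\mathcal R_N b\|_{L^2}^2 E^\top E$ appended to $\Theta_1$ plus a contribution of order $\gamma\alpha^{-1}\sum_{n\geq N+1} n^8 w_n^2$; combined with the strongly negative $2\gamma\sum_{n\geq N+1} n^4\sigma_n w_n^2$, the coefficient of $n^8 w_n^2$ stays negative precisely because $\alpha > 1/\pi^4$. For the remaining (order-$n^4$) part of the tail, the condition $\Theta_3 \geq 0$ makes the relevant map $n\mapsto \sigma_n + n^4/\alpha$ nonincreasing on $n\geq N+1$, so the negativity of the whole tail reduces to the single mode $n=N+1$, which is the scalar inequality $\Theta_2 < 0$. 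Taking the Schur complement of $\Theta_1 \prec 0$ with respect to $\zeta$ then yields $\dot V \leq -2\delta V - \epsilon_1\|\mathrm{col}(X,\zeta)\|^2 + (\text{nonlinear terms})$ for some $\epsilon_1 > 0$.

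The remaining task, and the main obstacle, is to absorb the nonlinear contributions in the $H^2$ setting, the nonlinearity $r = -\mu z z_x$ being neither globally Lipschitz nor bounded in the state-space norm. The forcing $R = \mathrm{col}(R_1,\tilde R_2)$ enters through $2X^\top P G R$; since $\|\tilde R_2\| \leq N^2\|r\|_{L^2}$ and, by the one-dimensional embedding $H^1(0,1)\hookrightarrow L^\infty(0,1)$, $\|r\|_{L^2} \leq |\mu|\,\|z\|_{L^\infty}\|z_x\|_{L^2} \leq C|\mu|\,\|z\|_{H^2}^2 \leq C_N|\mu|\,V$, one gets $\|R\|^2 \leq C_N\mu^2 V^2$. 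The tail nonlinearity $2\gamma\sum_{n\geq N+1} n^4 r_n w_n$ I would split once more as $\leq \epsilon\gamma\sum_{n\geq N+1} n^8 w_n^2 + \epsilon^{-1}\gamma\|r\|_{L^2}^2$, the first part absorbed into the spare negativity of the $n^8 w_n^2$ terms (available since $\alpha > 1/\pi^4$) and the second again of order $V^2$. Choosing $\epsilon$ and the ball radius small enough that all the order-$V^2$ remainders are dominated by $\delta V$, we obtain $\dot V \leq -2\delta V$ on $\{V\leq\epsilon_0\}$; hence, whenever $\|z_0\|_{H^2}$ is small enough that $V(0) < \epsilon_0$, $V(t) \leq e^{-2\delta t}V(0)$ for all $t\geq 0$, which, combined with $V(0)\leq C\|z_0\|_{H^2}^2$ and the equivalence of $V$ with $\|z\|_{H^2}^2 + \sum_{n=1}^N\hat z_n^2$, gives (\ref{eq: thm1 stability estimate}). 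The delicate point throughout is that this whole energy balance must be carried out at the $H^2$ level (forced by the Neumann trace output), which is why the tail is weighted by $n^4$, why the resulting $n^8$-order terms have to be controlled through $\alpha > 1/\pi^4$ together with $\Theta_3 \geq 0$, and why one must verify that every nonlinear remainder is genuinely quadratic in $\|z\|_{H^2}$.

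For feasibility, I would fix $\delta$, $N_0$, the gains $K,L$, and the scalars $\alpha > 1/\pi^4$, $\beta$, $\gamma$ independently of $N$, and let $N\to +\infty$. Then $M_\phi = 2\pi^2(\pi^2/6 - \sum_{n=1}^N n^{-2}) = O(1/N)\to 0$, $\|\mathcal R_N b\|_{L^2}^2 = 2\pi^{-2}\sum_{n\geq N+1} n^{-2}\to 0$, and $\sigma_{N+1} \sim -(N+1)^4\pi^4 \to -\infty$; hence $\Theta_3 = (N+1)^2(\pi^4 - 1/\alpha) - \lambda\pi^2 \to +\infty$ and, using $\sigma_{N+1} + (N+1)^4/\alpha = -(N+1)^2\Theta_3$, $\Theta_2 = -2\gamma(N+1)^2\Theta_3 + 2\gamma\delta + \beta M_\phi \to -\infty < 0$. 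For $\Theta_1 \prec 0$, I would exploit the block-triangular structure of $F$: the diagonal blocks $A_0 + \mathfrak B_0 K$ and $A_0 - LC_0$ are fixed and Hurwitz with decay rate exceeding $\delta$, $A_1 = \mathrm{diag}(\sigma_{N_0+1},\dots,\sigma_N)$ has decay rate tending to $+\infty$, the scalings make $\|\tilde C_1\|$, $\|\tilde{\mathfrak B}_1\|$, $\|E\|$ uniformly bounded, and the appended term $\alpha\gamma\|\mathcal R_N b\|_{L^2}^2 E^\top E$ vanishes as $N\to +\infty$; a suitable block-structured choice of $P$, as in the systematic feasibility arguments of~\cite{lhachemi2020finite,lhachemi2021nonlinear}, then renders $\Theta_1 \prec 0$ for $N$ large, completing the proof.
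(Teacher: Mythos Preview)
Your proposal is correct and follows essentially the same route as the paper's proof: the same Lyapunov functional $V = X^\top P X + \gamma\sum_{n\geq N+1} n^4 w_n^2$, the same Young/Cauchy--Schwarz splittings producing the $\Theta_1,\Theta_2,\Theta_3$ structure, the same quadratic bound $\|r\|_{L^2}^2 \lesssim V^2$ to absorb the nonlinearity on a small ball, and the same block-triangular/uniform-$\|P\|$ argument for feasibility. The only differences are cosmetic: the paper obtains its margin by passing from $\delta$ to some $\kappa>\delta$ via continuity (then handling the $r_n$ tail with the \emph{same} weight $\alpha$ rather than your auxiliary $\epsilon$), and for feasibility it takes $\beta=\sqrt{N}$, $\gamma=1/N$ rather than fixed scalars---your fixed-$\beta,\gamma$ variant also works and is in fact slightly simpler.
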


\begin{proof}
From (\ref{eq: thm1 constraints}) and by a continuity argument, we fix $\kappa > \delta$ so that $\Theta_1(\kappa) \prec 0$ and $\Theta_2(\kappa) \leq 0$. We introduce $\rho > 0$ selected large enough so that $\tilde{\Theta}_1(\kappa) \preceq 0$ where 
\begin{equation*}
\tilde{\Theta}_1(\kappa) = \begin{bmatrix}
F^\top P + P F + 2 \kappa P & P \mathcal{L} & P G  \\
\mathcal{L}^\top P & -\beta & 0 \\
G^\top P & 0 & - \rho I
\end{bmatrix} 
+ \alpha\gamma \Vert \mathcal{R}_N b \Vert_{L^2}^2 \tilde{E}^\top \tilde{E}.
\end{equation*}
Consider the Lyapunov functional defined for $X\in\R^{2N}$ and $w \in D(\mathcal{A})$ by
\begin{equation}\label{eq: def of V}
V(X,w) = X^\top P X + \gamma \sum_{n \geq N+1} n^4 w_n^2 .
\end{equation}
The choice of this Lyapunov functional is essentially motivated by the fact that we need to ensure that $\sum_{n \geq 1} n^4 w_n(t)^2$ remains bounded on any time interval of finite length in order to guarantee the existence of the system trajectories for all $t \geq 0$, see Remark~\ref{rmk: well-posedness}. Computing the time derivative of $V$ along the system trajectories (\ref{eq: RD system 1 spectral reduction 1}) and (\ref{eq: RD system 2 dynamics truncated model}), we infer that
\begin{align*}
& \dot{V} + 2 \kappa V = X^\top \{ F^\top P + P F + 2 \kappa P \} X + 2 X^\top P \mathcal{L} \zeta \\
& \quad  + 2 X^\top P G R + 2\gamma\sum_{n \geq N+1} n^4 w_n \{ (\sigma_n+\kappa) w_n + b_n \dot{u} + r_n \} .
\end{align*} 
The use of Young's inequality implies that 
\begin{align*}
2\sum_{n \geq N+1} n^4 w_n b_n \dot{u} & \leq \frac{1}{\alpha} \sum_{n \geq N+1} n^8 w_n^2 + \alpha \Vert \mathcal{R}_N b \Vert_{L^2}^2 \dot{u}^2 , \\
2\sum_{n \geq N+1} n^4 w_n r_n & \leq \frac{1}{\alpha} \sum_{n \geq N+1} n^8 w_n^2 + \alpha \Vert \mathcal{R}_N r \Vert_{L^2}^2 .
\end{align*}
Hence, using (\ref{eq: u in function of X}), we infer that
\begin{align*}
& \dot{V} + 2 \kappa V \\
& \leq X^\top \{ F^\top P + P F + 2 \kappa P \} X + 2 X^\top P \mathcal{L} \zeta + 2 X^\top P G R \\
& \phantom{\leq}\; + \alpha\gamma \Vert \mathcal{R}_N b \Vert_{L^2}^2 \tilde{X}^\top \tilde{E}^\top \tilde{E} \tilde{X} + 2\gamma\sum_{n \geq N+1} n^4 \left\{ \sigma_n+\kappa + \frac{n^4}{\alpha} \right\} w_n^2 \\
& \phantom{\leq}\; + \alpha\gamma \Vert \mathcal{R}_N r \Vert_{L^2}^2 .
\end{align*} 
From the definition of $\zeta$ given by (\ref{eq: residue of measurement}), the use of Cauchy-Schwarz inequality gives $\zeta^2 \leq M_\phi \sum_{n \geq N+1} n^4 w_n^2$. This implies that
\begin{align*}
& \dot{V} + 2 \kappa V \\
& \leq \tilde{X}^\top \tilde{\Theta}_1(\kappa) \tilde{X} 
+ \sum_{n \geq N+1} n^4 \Gamma_n w_n^2 
+ \alpha \gamma \Vert \mathcal{R}_N r \Vert_{L^2}^2 + \rho \Vert R \Vert^2 \\
& \leq \tilde{X}^\top \tilde{\Theta}_1(\kappa) \tilde{X} 
+ \sum_{n \geq N+1} n^4 \Gamma_n w_n^2 + \max ( \alpha \gamma , \rho N^4 ) \Vert r \Vert_{L^2}^2 
\end{align*}
with 
\begin{align*}
\Gamma_n 
& = 2\gamma  \left( \sigma_n + \kappa + \frac{n^4}{\alpha} \right) + \beta M_\phi \\
& = - 2\gamma n^2 \left\{ n^2 \left( \pi^4 - \frac{1}{\alpha} \right) - \lambda \pi^2 \right\} + 2\gamma\kappa + \beta M_\phi \\
& \leq - 2 \gamma n^2 \Theta_3 + 2\gamma\kappa + \beta M_\phi \\
& \leq -2 \gamma (N+1)^2 \Theta_3 + 2\gamma\kappa + \beta M_\phi = \Theta_2(\kappa)
\end{align*}
for all $n \geq N+1$, where we have used that $\alpha > 1/\pi^4$ and $\Theta_3 \geq 0$. Defining $C_1 = \max ( \alpha \gamma , \rho N^4 ) > 0$, and recalling that $\tilde{\Theta}_1(\kappa) \preceq 0$ and $\Theta_2(\kappa) \leq 0$, we obtain that 
$$\dot{V}+2\kappa V \leq C_1 \Vert r \Vert_{L^2}^2 .$$ 
We now study the term $\Vert r \Vert_{L^2}^2$ where $r = -\mu z z_x$. We focus on the case $\mu \neq 0$ (if $\mu = 0$ then $r=0$ and the claimed result easily follows) by computing 
$\Vert r(t,\cdot) \Vert_{L^2}^2
= \mu^2 \int_0^1 \vert z(t,x) \vert^2 \vert z_x(t,x) \vert^2 \,\mathrm{d}x
\leq \mu^2 \Vert z(t,\cdot) \Vert_{L^\infty}^2 \Vert z_x(t,\cdot) \Vert_{L^2}^2$.
Since $z(t,1) = 0$, we have $z(t,x) = \int_1^x z_x(t,s) \,\mathrm{d}s$, implying that $\vert z(t,x) \vert \leq \int_0^1 \vert z_x(t,s) \vert \,\mathrm{d}s \leq \Vert z_x(t,\cdot) \Vert_{L^2}$ hence $\Vert z(t,\cdot) \Vert_{L^\infty} \leq \Vert z_x(t,\cdot) \Vert_{L^2}$. Therefore, we deduce that 
$$\Vert r(t,\cdot) \Vert_{L^2}^2 \leq \mu^2 \Vert z_x(t,\cdot) \Vert_{L^2}^4 . $$ 
Owing to (\ref{eq: change of variable}) we have $z_x(t,x) = w_x(t,x) - u(t)$ hence $\Vert z_x(t,\cdot) \Vert_{L^2} \leq \Vert w_x(t,\cdot) \Vert_{L^2} + \vert u(t) \vert$. Based on (\ref{eq: H1 norm in function of the modes}), we have $\Vert w_x(t,\cdot) \Vert_{L^2}^2 = \sum_{n \geq 1} n^2 \pi^2 w_n(t)^2 \leq \pi^2 \sum_{n=1}^N n^2 w_n(t)^2 + \frac{\pi^2}{\gamma (N+1)^2} V(t)$. Moreover from (\ref{eq: projected change of variable}) we have $w_n = z_n + b_n u = e_n + \hat{z}_n + b_n u$. This implies that $\Vert w_x(t,\cdot) \Vert_{L^2}^2 \leq C_1 V(t) + C_2 u(t)^2$ for some constants $C_1,C_2 > 0$. Recalling that the input $u$ is expressed by (\ref{eq: u in function of X}), we have $\vert u(t) \vert^2 \leq \Vert \tilde{K} \Vert^2 \Vert X(t) \Vert^2 \leq \frac{\Vert \tilde{K} \Vert^2}{\lambda_m(P)} V(t)$. Putting the estimates together we obtain that 
$$\Vert z_x(t,\cdot) \Vert_{L^2}^2 \leq 2 \Vert w_x(t,\cdot) \Vert_{L^2}^2 + 2 \vert u(t) \vert^2 \leq C_3 V(t)$$ 
for some constant $C_3 > 0$, which in turns gives $\dot{V}+2\kappa V \leq C_4 V^2 $ for some constant $C_4 > 0$. Recalling that $0 < \delta < \kappa$, we have 
$$\dot{V}+2\delta V \leq - \left\{ 2(\kappa-\delta) - C_3 V \right\} V.$$ 
Hence, if we select the initial condition so that $V(0) < C = \frac{2(\kappa-\delta)}{C_4}$, we conclude that $\dot{V}+2\kappa V \leq 0$ on the maximal interval of existence of the system trajectories. From the definition (\ref{eq: def of V}) of $V$, this implies that $\sum_{n \geq 1} n^4 w_n(t)^2$ remains bounded on any time interval of finite length. Hence the system trajectory is well defined for all $t \geq 0$ with $\dot{V}+2\kappa V \leq 0$. The claimed stability estimate (\ref{eq: thm1 stability estimate}) easily follows from the change of variable (\ref{eq: change of variable}), the identities (\ref{eq: H1 norm in function of the modes}), and the definition (\ref{eq: def of V}) of $V$.

We conclude the proof by showing that the constraints (\ref{eq: thm1 constraints}) are feasible when $N$ is selected large enough. To do so we note that (i) $A_0 + \mathfrak{B}_0 K + \delta I$ and $A_0 - L C_0 + \delta I$ are Hurwitz; (ii) $\Vert e^{(A_1+\delta I) t} \Vert \leq e^{-\kappa_0 t}$ for all $t \geq 0$ with some positive constant $\kappa_0 > 0$ independent of $N$; (iii) $\Vert L \tilde{C}_1 \Vert \leq \Vert L \Vert \Vert \tilde{C}_1 \Vert$ and $\Vert \tilde{\mathfrak{B}}_1 K \Vert \leq \Vert \tilde{\mathfrak{B}}_1 \Vert \Vert K \Vert$ where $\Vert L \Vert$ and $\Vert K \Vert$ are independent of $N$ while $\Vert \tilde{\mathfrak{B}}_1 \Vert = O(1)$ and $\Vert \tilde{C}_1 \Vert = O(1)$ a $N \rightarrow + \infty$. The application of the Lemma reported in appendix of \cite{lhachemi2020finite} to the matrix $F+\delta I$ shows that the unique solution $P \succ 0$ to the Lyapunov equation $F^\top P + P F + 2 \delta P = -I$ satisfies $\Vert P \Vert = O(1)$ as $N \rightarrow + \infty$. We fix $\alpha > 1/\pi^4$ arbitrarily. We define $\beta=\sqrt{N}$ and $\gamma = 1/N$. This implies that $\Theta_2 \rightarrow - \infty$ and $\Theta_3 \rightarrow + \infty$ as $N \rightarrow + \infty$. Moreover, since $\Vert \mathcal{L} \Vert = \sqrt{2} \Vert L \Vert$ is independent of $N$, and $\Vert P \Vert = O(1)$ and $\Vert E \Vert = O(1)$ as $N \rightarrow + \infty$, the application of the Schur complement implies that $\Theta_1 \prec 0$ for $N$ selected to be large enough. This completes the proof.
\end{proof}

\begin{remark}
For a given order $N \geq N_0+1$ of the observer, the control design constraints (\ref{eq: thm1 constraints}) are nonlinear w.r.t. the decision variables $P \succ 0$, $\alpha > 1/\pi^4$, and $\beta,\gamma>0$. However, if one fix arbitrarily the value of $\alpha > 1/\pi^4$, we obtain a LMI formulation of the constraints. As shown in the proof, this latter formulation remains feasible when selecting $N$ to be large enough.
\end{remark}

\section{Remedies for the case $\lambda \in \Lambda$}\label{sec3}

From Remark~\ref{rmk: Kalman condition}, the configuration $\lambda \in \Lambda$ does not allow the application of Theorem~\ref{thm1} for $\lambda \in \Lambda$ due to the loss of controllability or/and observability properties. We discuss here how to change the actuation/sensing scheme in order to achieve the local exponential stabilization of the KSE when $\lambda \in \Lambda$.

\subsection{Remedy for the case $\lambda \in \Lambda_1 \backslash \Lambda_2$}

In the case $\lambda \in \Lambda_1 \backslash \Lambda_2$ we have: (i) there exists a unique $n_0 \in \N^*$ so that $\lambda = n_0^2 \pi^2$, which gives in particular $\sigma_{n_0} = 0$; and (ii) $\sigma_n \neq \sigma_m$ for all $n \neq m$. Hence, all the modes are observable using the system output (\ref{eq: measurement operator}). However, in the actuation scheme of (\ref{eq: RD system 1}), there is one (and only one) uncontrollable eigenvalue: $\sigma_{n_0} = 0$. In order to circumvent this issue for achieving the stabilization of the KSE, we propose to modify the actuation scheme as follow:
\begin{subequations}\label{eq: RD system 2}
\begin{align}
& z_t + z_{xxxx} + \lambda z_{xx} + \mu z z_x = 0 \label{eq: RD system 2 - 1} \\
& z_{xx}(t,0) = u(t) \label{eq: RD system 2 - 2} \\
& z(t,0) = z(t,1) = z_{xx}(t,1) = 0 \label{eq: RD system 2 - 3} \\
& z(0,x) = z_0(x) \label{eq: RD system 2 - 4}
\end{align}
\end{subequations}
along with the system output (\ref{eq: measurement operator}). Based on the change of variable formula
\begin{equation}\label{eq: change of variable 2}
w(t,x) = z(t,x) + \frac{1}{6} x (1-x)^3 u(t) 
\end{equation}
we infer that
\begin{align*}
& w_t = -w_{xxxx} - \lambda w_{xx} + a u + b \dot{u} + r \\
& w(t,0) = w(t,1) = w_{xx}(t,0) = w_{xx}(t,1) = 0 \\
& w(0,x) = w_0(x)  
\end{align*}
where $b(x) = \frac{1}{6} x (1-x)^3$, $a(x) = b''''(x) + \lambda b''(x)$, $r(t,x) = -\mu z(t,x) z_x(t,x)$, and $w_0(x) = z_0(x) + \frac{1}{6} x (1-x)^3 u(0)$. Moreover, we have
\begin{equation*}
\tilde{y}(t) = w_x(t,1) = y(t) .
\end{equation*}

Considering $\phi_n$ defined by (\ref{eq: definition phi_n}), along with the coefficients the Fourier coefficients defined by $z_n(t) = \left< z(t,\cdot), \phi_n \right>$, $w_n(t) = \left< w(t,\cdot) , \phi_n \right>$, $a_n = \left< a , \phi_n \right>$, $b_n = \left< b , \phi_n \right>$, and $r_n(t) = \left< r(t,\cdot) , \phi_n \right>$, we infer that 
\begin{equation*}
\dot{w}_n (t)= \sigma_n w_n(t) + a_n u(t) + b_n \dot{u}(t) + r_n(t) . 
\end{equation*}
The change of variable formula (\ref{eq: change of variable 2}) gives (\ref{eq: projected change of variable}). Thus the combination of the latter equations implies (\ref{eq: RD system 1 spectral reduction z coordinate})
with 
$$\beta_n = a_n + \sigma_n b_n = - \phi_n'(0) = - \sqrt{2} n \pi .$$ 
We obtain that $\beta_n \neq 0$ for all $n \geq 1$. Assuming that $\lambda > 0$ is so that $\lambda\notin\Lambda_2$ and applying the same procedure that the one reported in the previous section, we deduce that the pair $(A_0,\mathfrak{B}_0)$ satisfies the Kalman condition. This allows the statement of the following theorem whose proof is analogous to the one of Theorem~\ref{thm1}.

\begin{theorem}\label{thm2}
Let $\lambda > 0$ with $\lambda \notin \Lambda_2$ and $\mu \in\R$. Let $\delta > 0$ and $N_0 \geq 1$ be such that $\sigma_n < - \delta$ for all $n \geq N_0 + 1$. Let $K\in\R^{1 \times N_0}$ and $L\in\R^{N_0}$ be such that $A_0 + \mathfrak{B}_0 K$ and $A_0 - L C_0$ are Hurwitz with eigenvalues that have a real part strictly less than $-\delta<0$. For a given $N \geq N_0 +1$, assume that there exist a symmetric positive definite $P\in\R^{2N \times 2N}$ and positive real numbers $\alpha > 3/(2\pi^4)$ and $\beta,\gamma > 0$ such that 
\begin{equation}\label{eq: thm2 constraints}
\Theta_1 \prec 0 , \quad
\Theta_2 < 0 , \quad
\Theta_3 \geq 0
\end{equation}
where
\begin{align*}
\Theta_1 & = \begin{bmatrix}
F^\top P + P F + 2 \delta P + \alpha\gamma \Vert \mathcal{R}_N a \Vert_{L^2}^2 \tilde{K}^\top \tilde{K} & P \mathcal{L}  \\
\mathcal{L}^\top P & -\beta
\end{bmatrix}  \\
& \phantom{=}\; + \alpha\gamma \Vert \mathcal{R}_N b \Vert_{L^2}^2 E^\top E \\
\Theta_2 & = 2 \gamma \left( \sigma_{N+1} + \delta + \frac{3(N+1)^4}{2\alpha} \right) + \beta M_\phi \\
\Theta_3 & = (N+1)^2 \left( \pi^4 - \frac{3}{2\alpha} \right) - \lambda \pi^2 
\end{align*}
where $M_\phi = 2\pi^2 \sum_{n \geq N+1} \frac{1}{n^2} = 2\pi^2 \left( \frac{\pi^2}{6} - \sum_{n=1}^N \frac{1}{n^2} \right)$. Then, considering the closed-loop system composed of the plant (\ref{eq: RD system 2}) with the system output (\ref{eq: measurement operator}) and the controller (\ref{eq: controller 2}), there exist $C,M > 0$ such that for any initial conditions $z_0 \in D(\mathcal{A})$ with $\Vert z_0 \Vert_{H^2} < C$, the system trajectory with null initial condition of the observer ($\hat{z}_n(0) = 0$ for all $1 \leq n \leq N$) satisfies 
\begin{equation}\label{eq: thm2 stability estimate}
\Vert z(t,\cdot)\Vert_{H^2}^2 + \sum_{n=1}^N \hat{z}_n(t)^2 \leq M e^{-2\delta t} \Vert z_0 \Vert_{H^2}^2
\end{equation} 
for all $t \geq 0$. Moreover, the constraints (\ref{eq: thm2 constraints}) are feasible when $N$ is selected large enough.
\end{theorem}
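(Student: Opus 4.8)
The plan is to follow the proof of Theorem~\ref{thm1} almost verbatim, so I will only describe the steps and stress what changes because of the Neumann‑type actuation in (\ref{eq: RD system 2}). The reduced‑order model (\ref{eq: RD system 2 dynamics truncated model}) and the identities (\ref{eq: u in function of X}) for $u$ and $\dot u$ transfer unchanged, since $A_0,A_1,\mathfrak{B}_0,\tilde{\mathfrak{B}}_1,C_0,\tilde{C}_1,K,L$ keep the same structure (only the scalars $\beta_n=-\sqrt 2\,n\pi$ change, with $\Vert\tilde{\mathfrak{B}}_1\Vert,\Vert\tilde C_1\Vert=O(1)$ still). I would start from (\ref{eq: thm2 constraints}) and use continuity to pick $\kappa>\delta$ for which $\Theta_1\prec0$ and $\Theta_2\le0$ still hold after replacing $2\delta P$ by $2\kappa P$, then choose $\rho>0$ large enough that the augmented matrix $\tilde\Theta_1(\kappa)$ — obtained by adjoining to $\Theta_1(\kappa)$ the block $PG$ on the variable $R$ with diagonal entry $-\rho I$ and replacing $E$ by $\tilde E$ — satisfies $\tilde\Theta_1(\kappa)\preceq0$. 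The Lyapunov functional is the same, $V(X,w)=X^\top PX+\gamma\sum_{n\ge N+1}n^4 w_n^2$, and its role is the same: $\dot V+2\kappa V\le0$ forces $\sum_{n\ge1}n^4 w_n(t)^2$ to remain bounded on finite intervals, which by the argument of Remark~\ref{rmk: well-posedness} gives global existence of the closed‑loop trajectories.

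The only genuinely new feature is that the tail modal dynamics now reads $\dot w_n=\sigma_n w_n+a_n u+b_n\dot u+r_n$, with three inhomogeneous terms instead of two. Differentiating $V$ along (\ref{eq: RD system 2 dynamics truncated model}) and the modal equations, I would bound each of the three cross terms by Young's inequality, e.g. $2\sum_{n\ge N+1}n^4 w_n a_n u\le\frac1\alpha\sum_{n\ge N+1}n^8 w_n^2+\alpha\Vert\mathcal{R}_N a\Vert_{L^2}^2 u^2$, and likewise for $b_n\dot u$ and $r_n$. The three copies of $\frac1\alpha\sum n^8 w_n^2$ make the modal coefficient $\Gamma_n=2\gamma\left(\sigma_n+\kappa+\frac{3n^4}{2\alpha}\right)+\beta M_\phi$; this is precisely why $\Theta_2$ and $\Theta_3$ carry the factor $\frac{3}{2\alpha}$ (resp. $\frac{3(N+1)^4}{2\alpha}$) in place of the $\frac1\alpha$ of Theorem~\ref{thm1}, and why $\alpha$ must now satisfy $\alpha>3/(2\pi^4)$ so that $\pi^4-\frac{3}{2\alpha}>0$. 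Substituting $u=\tilde K X$ and $\dot u=\tilde E\tilde X$ from (\ref{eq: u in function of X}) turns the $\alpha\Vert\mathcal{R}_N a\Vert_{L^2}^2 u^2$ term into the extra block $\alpha\gamma\Vert\mathcal{R}_N a\Vert_{L^2}^2\tilde K^\top\tilde K$ in $\Theta_1$, the $\alpha\Vert\mathcal{R}_N b\Vert_{L^2}^2\dot u^2$ term into $\alpha\gamma\Vert\mathcal{R}_N b\Vert_{L^2}^2 E^\top E$ exactly as in Theorem~\ref{thm1}, and $\zeta^2\le M_\phi\sum_{n\ge N+1}n^4 w_n^2$ (the same $M_\phi$, since $c_n=\phi_n'(1)$ is unchanged) into the $\beta M_\phi$ in $\Gamma_n$. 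Using $\alpha>3/(2\pi^4)$ and $\Theta_3\ge0$ as in Theorem~\ref{thm1}, one gets $\Gamma_n\le\Theta_2(\kappa)\le0$ for all $n\ge N+1$; together with $\tilde\Theta_1(\kappa)\preceq0$ this yields $\dot V+2\kappa V\le C_1\Vert r\Vert_{L^2}^2$ for some $C_1>0$.

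The term $\Vert r\Vert_{L^2}^2$ is then absorbed into $V^2$ exactly as in the proof of Theorem~\ref{thm1}: the state of (\ref{eq: RD system 2}) still satisfies $z(t,0)=z(t,1)=0$, hence $\Vert z(t,\cdot)\Vert_{L^\infty}\le\Vert z_x(t,\cdot)\Vert_{L^2}$ and $\Vert r(t,\cdot)\Vert_{L^2}^2\le\mu^2\Vert z_x(t,\cdot)\Vert_{L^2}^4$; writing $z_x=w_x-b'u$ from (\ref{eq: change of variable 2}), splitting $\Vert w_x\Vert_{L^2}^2=\sum_{n\ge1}n^2\pi^2 w_n^2$ into its first $N$ modes (controlled, via $w_n=e_n+\hat z_n+b_n u$, by $\Vert X\Vert$ and $|u|$) and a tail bounded by $\frac{\pi^2}{\gamma(N+1)^2}V$, and using $|u|^2\le\frac{\Vert\tilde K\Vert^2}{\lambda_m(P)}V$, one obtains $\Vert z_x(t,\cdot)\Vert_{L^2}^2\le C_3 V$ and thus $\dot V+2\kappa V\le C_4 V^2$. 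Choosing the initial data so that $V(0)<C:=2(\kappa-\delta)/C_4$ then gives $\dot V+2\delta V\le0$ on the maximal interval of existence, which is therefore $[0,\infty)$, and (\ref{eq: thm2 stability estimate}) follows from (\ref{eq: change of variable 2}), (\ref{eq: H1 norm in function of the modes}) and the definition of $V$, after relating $\Vert z\Vert_{H^2}^2$ to $\sum_{n\ge1}n^4\langle z,\phi_n\rangle^2$ (legitimate since $z(t,\cdot)\in H_0^1(0,1)\cap H^2(0,1)$ and $u(0)=0$).

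Feasibility of (\ref{eq: thm2 constraints}) for $N$ large is obtained verbatim as in Theorem~\ref{thm1}: the structural facts (Hurwitzness of $A_0+\mathfrak{B}_0K+\delta I$ and $A_0-LC_0+\delta I$, $\Vert e^{(A_1+\delta I)t}\Vert\le e^{-\kappa_0 t}$ with $\kappa_0>0$ independent of $N$, and $\Vert\tilde{\mathfrak{B}}_1\Vert,\Vert\tilde C_1\Vert=O(1)$) still hold, so the lemma from the appendix of~\cite{lhachemi2020finite} applied to $F+\delta I$ gives $\Vert P\Vert=O(1)$ for the solution of $F^\top P+PF+2\delta P=-I$; fixing $\alpha>3/(2\pi^4)$, $\beta=\sqrt N$, $\gamma=1/N$ makes $\Theta_3\to+\infty$ and $\Theta_2\to-\infty$ (using $M_\phi=O(1/N)$), while $\Vert\mathcal{R}_N a\Vert_{L^2}\to0$ and $\Vert\mathcal{R}_N b\Vert_{L^2}\to0$ (both $a$ and $b$ are polynomials, hence in $L^2$), so a Schur‑complement argument gives $\Theta_1\prec0$ for $N$ large. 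The only step requiring real care — and the sole place where the proof departs from that of Theorem~\ref{thm1} — is the bookkeeping of the additional affine‑in‑$u$ term $a u$ produced by the Neumann‑type actuation: it is what turns the two Young estimates of Theorem~\ref{thm1} into three, thereby shifting the numerical constants in $\Theta_2$, $\Theta_3$ and the admissible range of $\alpha$, and what introduces the $\Vert\mathcal{R}_N a\Vert_{L^2}^2\tilde K^\top\tilde K$ block in $\Theta_1$; everything else is a routine transcription.
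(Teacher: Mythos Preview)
Your proposal is correct and matches the paper's approach exactly: the paper simply states that the proof of Theorem~\ref{thm2} ``is analogous to the one of Theorem~\ref{thm1}'', and your write-up spells out precisely the modifications that analogy entails --- the extra $a_n u$ term in the tail dynamics forcing a third Young estimate (hence the $3/(2\alpha)$ in $\Theta_2,\Theta_3$ and the bound $\alpha>3/(2\pi^4)$), and the corresponding $\alpha\gamma\Vert\mathcal{R}_N a\Vert_{L^2}^2\tilde K^\top\tilde K$ block in $\Theta_1$. The remaining steps (absorption of $\Vert r\Vert_{L^2}^2$ into $V^2$, the local decay argument, and the large-$N$ feasibility via the lemma of~\cite{lhachemi2020finite}) carry over verbatim, as you note.
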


\subsection{Remedy for the case $\lambda \in \Lambda_2$}

The case $\lambda \in \Lambda_2$ is the most stringent to deal with because it implies the existence of integers $1 \leq n < m$ so that  $\sigma_n = \sigma_m$. In this case, $\lambda = (n^2 + m^2) \pi^2$ and $\sigma_n = \sigma_m = n^2 m^2 \pi^4 > 0$.  From there, it is easy to see that the eigenvalue $\sigma_n = \sigma_m$ is actually of multiplicty 2. Hence, owing to the diagonal structure of the unbounded operator $\mathcal{A}$, the plant has at least one unstable eigenvalue with multiplicity 2 which is not controllable (resp. observable) in the context of a single input (resp. output). Note there are situations for which there exist several eigenvalues of multiplicty 2. For example, in the case $\lambda = 65 \pi^2$ we have $\sigma_1 = \sigma_8 = 64 \pi^4$ and $\sigma_4 = \sigma_7 = 784 \pi^4$.

Based on these elements, the treatment of the case $\lambda\in\Lambda_2$ requires the increase of the number of control inputs and measured outputs, yielding a multi-input multi-output control design problem. In this study we consider the case :
\begin{subequations}\label{eq: RD system 3}
\begin{align}
& z_t + z_{xxxx} + \lambda z_{xx} + \mu z z_x = 0 \label{eq: RD system 3 - 1} \\
& z(t,0) = u_1(t) , \quad z_{xx}(t,0) = u_2(t) \label{eq: RD system 3 - 2} \\
& z(t,1) = z_{xx}(t,1) = 0 \label{eq: RD system 3 - 3} \\
& z(0,x) = z_0(x) \label{eq: RD system 3 - 4}
\end{align}
\end{subequations}
along with the system outputs
\begin{equation}\label{eq: measurement operator 2}
y(t) = \begin{bmatrix} z_x(t,1) & z_x(t,\xi) \end{bmatrix}^\top 
\end{equation}
for some $\xi\in[0,1)$ to be selected. Using the change of variable:
\begin{equation}\label{eq: change of variable 3}
w(t,x) = z(t,x) - (1-x) u_1(t) + \frac{1}{6} x (1-x)^3 u_2(t) ,
\end{equation}
we infer that
\begin{align*}
& w_t = -w_{xxxx} - \lambda w_{xx} + a_2 u_2 + b_1 \dot{u}_1 + b_2 \dot{u}_2 + r \\
& w(t,0) = w(t,1) = w_{xx}(t,0) = w_{xx}(t,1) = 0 \\
& w(0,x) = w_0(x) . 
\end{align*}
where $b_1(x) = -(1-x)$, $b_2(x) = \frac{1}{6} x (1-x)^3$, $a_2(x) = b_2''''(x) + \lambda b_2''(x)$, $r(t,x) = -\mu z(t,x) z_x(t,x)$, and $w_0(x) = z_0(x) - (1-x) u_1(0) + \frac{1}{6} x (1-x)^3 u_2(0)$. Moreover, we have
\begin{equation*}
\tilde{y}(t) 
= \begin{bmatrix} w_x(t,1) \\ w_x(t,\xi) \end{bmatrix}
= y(t) + \begin{bmatrix} u_1(t) \\ u_1(t) + b_2'(\xi) u_2(t) \end{bmatrix} 
\end{equation*}
which can be expressed by (\ref{eq: RD system 1 spectral reduction 3}) with $c_n = \begin{bmatrix} c_{1,n} & c_{2,n} \end{bmatrix}^\top = \begin{bmatrix} \phi_n'(1) & \phi_n'(\xi) \end{bmatrix}^\top$. Where $\phi_n$ is still defined by (\ref{eq: definition phi_n}). We introduce the Fourier coefficients defined by $z_n(t) = \left< z(t,\cdot), \phi_n \right>$, $w_n(t) = \left< w(t,\cdot) , \phi_n \right>$, $a_{2,n} = \left< a_2 , \phi_n \right>$, $b_{1,n} = \left< b_1 , \phi_n \right> = - \frac{\sqrt{2}}{n\pi}$, $b_{2,n} = \left< b_2 , \phi_n \right>$, and $r_n(t) = \left< r(t,\cdot) , \phi_n \right>$. We infer that 
\begin{equation*}
\dot{w}_n (t)= \sigma_n w_n(t) + a_{2,n} u_2(t) + b_{1,n} \dot{u}_1(t) + b_{2,n} \dot{u}_2(t) + r_n(t) . 
\end{equation*}
The use of the change of variable formula (\ref{eq: change of variable 3}) gives 
\begin{align*}
w_n(t) & = z_n(t) + b_{1,n} u_1(t) + b_{2,n} u_2(t) \\
& = z_n(t) + b_{n} u(t)
\end{align*}
where $u(t) = \begin{bmatrix} u_1(t) & u_2(t) \end{bmatrix}^\top$ and $b_n = \begin{bmatrix} b_{1,n} & b_{2,n} \end{bmatrix}$. This implies that
\begin{align*}
\dot{z}_n (t) & = \sigma_n z_n(t) + \beta_{1,n} u_1(t) + \beta_{2,n} u_2(t) + r_n(t) \\
& = \sigma_n z_n(t) + \beta_{n} u(t) + r_n(t) 
\end{align*}
where $\beta_{1,n} = \sigma_n b_{1,n} = -\sqrt{2} n\pi ( -n^2\pi^2 + \lambda )$, $\beta_{2,n} = a_{2,n} + \sigma_n b_{2,n} = - \sqrt{2} n \pi$, and $\beta_n = \begin{bmatrix} \beta_{1,n} & \beta_{2,n} \end{bmatrix}$. Considering feedback gains $k_n \in\R^2$ and observer gains $l_n \in\R^{1 \times 2}$, the controller architecture is still given by (\ref{eq: controller 2}).  So we define the different matrices as in the previous sections except: 
\begin{equation*}
\mathfrak{B}_0 = \begin{bmatrix} \beta_{1,1} & \beta_{2,1} \\ \vdots & \vdots \\ \beta_{1,N_0} & \beta_{2,N_0} \end{bmatrix}
, \quad
\tilde{\mathfrak{B}}_1 = \begin{bmatrix} \frac{\beta_{1,N_0 + 1}}{(N_0+1)^4} & \frac{\beta_{2,N_0 + 1}}{(N_0+1)^4} \\ \vdots & \vdots \\ \frac{\beta_{1,N}}{N^4} & \frac{\beta_{2,N}}{N^4} \end{bmatrix} .
\end{equation*}

Assuming that $\lambda > 0$ is so that $\lambda\notin\Lambda_2$, all the eigenvalues are simple and, because $\beta_{2,n} \neq 0$ and $\phi_n'(1) \neq 0$, the pairs $(A_0,\mathfrak{B}_0)$ and $(A_0,C_0)$ satisfy the Kalman condition. Assuming now that $\lambda\in\Lambda_2$, there exist eigenvalues of multiplicity 2. Let $1 \leq n < m$ be such that $\sigma_n = \sigma_m$. We check that 
$\begin{vmatrix}
\beta_{1,n} & \beta_{2,n} \\
\beta_{1,m} & \beta_{2,m}
\end{vmatrix}
= 2 n m (m^2-n^2) \pi^4 \neq 0$,
hence the pair $(A_0,\mathfrak{B}_0)$ satisfies the Kalman condition. We also have 
$\begin{vmatrix}
c_{1,n} & c_{1,m} \\
c_{2,n} & c_{2,m}
\end{vmatrix}
= 2 n m \pi^2 \left[ (-1)^n \cos(m\pi\xi) - (-1)^m \cos(n\pi\xi) \right]$, hence if we select the measurement location $\xi\in[0,1)$ such that $(-1)^n \cos(m\pi\xi) \neq (-1)^m \cos(n\pi\xi)$ for all pairs of integers $1 \leq n < m$ satisfying $\lambda = (n^2+m^2)\pi^2$, then the pair $(A_0,C_0)$ satisfies the Kalman condition. This allows the statement of the following theorem, whose proof is analogous to the one of Theorem~\ref{thm1}, and  where we denote by $\tilde{K}_1$ and $\tilde{K}_2$ the two lines of $\tilde{K}$, $E_1$ and $E_2$ the two lines of $E$, and $\mathcal{L}_1$ and $\mathcal{L}_2$ the two columns of $\mathcal{L}$.

\begin{theorem}\label{thm3}
Let $\lambda > 0$ and $\mu \in\R$. Let $\xi\in[0,1)$ be such that $(-1)^n \cos(m\pi\xi) \neq (-1)^m \cos(n\pi\xi)$ for any pair of integers $1 \leq n < m$ satisfying $\lambda = (n^2+m^2)\pi^2$, if any. Let $\delta > 0$ and $N_0 \geq 1$ be such that $\sigma_n < - \delta$ for all $n \geq N_0 + 1$. Let $K\in\R^{2 \times N_0}$ and $L\in\R^{N_0 \times 2}$ be such that $A_0 + \mathfrak{B}_0 K$ and $A_0 - L C_0$ are Hurwitz with eigenvalues that have a real part strictly less than $-\delta<0$. For a given $N \geq N_0 +1$, assume that there exist a symmetric positive definite $P\in\R^{2N \times 2N}$ and positive real numbers $\alpha > 2/\pi^4$ and $\beta_1,\beta_2,\gamma > 0$ such that 
\begin{equation}\label{eq: thm3 constraints}
\Theta_1 \prec 0 , \quad
\Theta_2 < 0 , \quad
\Theta_3 \geq 0
\end{equation}
where
\begin{align*}
\Theta_1 & = \begin{bmatrix}
F^\top P + P F + 2 \delta P + \alpha\gamma \Vert \mathcal{R}_N a_2 \Vert_{L^2}^2 \tilde{K}_2^\top \tilde{K}_2 & P \mathcal{L}_1 & P \mathcal{L}_2  \\
\mathcal{L}_1^\top P & -\beta_1 & 0  \\
\mathcal{L}_2^\top P & 0 & -\beta_2
\end{bmatrix}  \\
& \phantom{=}\; + \alpha\gamma \Vert \mathcal{R}_N b_1 \Vert_{L^2}^2 E_1^\top E_1 + \alpha\gamma \Vert \mathcal{R}_N b_2 \Vert_{L^2}^2 E_2^\top E_2 \\
\Theta_2 & = 2 \gamma \left( \sigma_{N+1} + \delta + \frac{2(N+1)^4}{\alpha} \right) + (\beta_1 + \beta_2) M_\phi \\
\Theta_3 & = (N+1)^2 \left( \pi^4 - \frac{2}{\alpha} \right) - \lambda \pi^2  
\end{align*}
where $M_\phi = 2\pi^2 \sum_{n \geq N+1} \frac{1}{n^2} = 2\pi^2 \left( \frac{\pi^2}{6} - \sum_{n=1}^N \frac{1}{n^2} \right)$. Then, considering the closed-loop system composed of the plant (\ref{eq: RD system 3}) with the system output (\ref{eq: measurement operator 2}) and the controller (\ref{eq: controller 2}), there exist $C,M > 0$ such that for any initial conditions $z_0 \in D(\mathcal{A})$ with $\Vert z_0 \Vert_{H^2} < C$, the system trajectory with null initial condition of the observer ($\hat{z}_n(0) = 0$ for all $1 \leq n \leq N$) satisfies 
\begin{equation}\label{eq: thm3 stability estimate}
\Vert z(t,\cdot)\Vert_{H^2}^2 + \sum_{n=1}^N \hat{z}_n(t)^2 \leq M e^{-2\delta t} \Vert z_0 \Vert_{H^2}^2
\end{equation} 
for all $t \geq 0$. Moreover, the constraints (\ref{eq: thm3 constraints}) are feasible when $N$ is selected large enough.
\end{theorem}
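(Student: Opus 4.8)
The strategy is to reproduce the argument of Theorem~\ref{thm1}, carefully tracking the two control inputs $u_1,u_2$ and the two measured outputs. First I would use (\ref{eq: thm3 constraints}) and a continuity argument to fix some $\kappa>\delta$ for which $\Theta_1\prec 0$ and $\Theta_2\leq 0$ persist with $\delta$ replaced by $\kappa$, then pick $\rho>0$ large enough that the matrix obtained from $\Theta_1(\kappa)$ by appending the block $PG$ together with a diagonal block $-\rho I$ (and upgrading $E_1,E_2$ to $\tilde E_1,\tilde E_2$) is negative semidefinite, exactly as in Theorem~\ref{thm1}. The Lyapunov functional is unchanged, $V(X,w)=X^\top P X+\gamma\sum_{n\geq N+1}n^4 w_n^2$, the tail weight being present precisely to control $\Vert w_{xx}\Vert_{L^2}^2$ and hence to secure global-in-time existence through Remark~\ref{rmk: well-posedness}.

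Next I would differentiate $V$ along the trajectories of (\ref{eq: RD system 1 spectral reduction 1}) and (\ref{eq: RD system 2 dynamics truncated model}). The sole structural novelty relative to Theorem~\ref{thm1} is that the $n$-th modal equation now reads $\dot w_n=\sigma_n w_n+a_{2,n}u_2+b_{1,n}\dot u_1+b_{2,n}\dot u_2+r_n$, so Young's inequality has to be invoked four times — on the cross terms involving $a_{2,n}u_2$, $b_{1,n}\dot u_1$, $b_{2,n}\dot u_2$ and $r_n$ — each instance generating a term $\tfrac1\alpha\sum_{n\geq N+1}n^8 w_n^2$. Collecting them, the coefficient of $n^4 w_n^2$ in $\dot V+2\kappa V$ becomes $2\gamma(\sigma_n+\kappa+2n^4/\alpha)+(\beta_1+\beta_2)M_\phi$, which, using $\alpha>2/\pi^4$ and $\Theta_3\geq 0$, is bounded above by $\Theta_2(\kappa)\leq 0$ for every $n\geq N+1$, exactly paralleling the $\Gamma_n$ estimate of Theorem~\ref{thm1}. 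For the measurement residue, I would write $\zeta=\mathrm{col}(\zeta_1,\zeta_2)$ with $\zeta_i=\sum_{n\geq N+1}\phi_n'(\xi_i)w_n$ (with $\xi_1=1$, $\xi_2=\xi$) and use the uniform bound $\vert\phi_n'(x)\vert^2\leq 2n^2\pi^2$ together with Cauchy--Schwarz to get $\zeta_i^2\leq M_\phi\sum_{n\geq N+1}n^4 w_n^2$ for \emph{both} $i$; this is exactly what matches the two diagonal entries $-\beta_1,-\beta_2$ of $\Theta_1$ and the term $(\beta_1+\beta_2)M_\phi$ of $\Theta_2$. The terms $\alpha\gamma\Vert\mathcal R_N a_2\Vert_{L^2}^2\tilde K_2^\top\tilde K_2$, $\alpha\gamma\Vert\mathcal R_N b_1\Vert_{L^2}^2 E_1^\top E_1$ and $\alpha\gamma\Vert\mathcal R_N b_2\Vert_{L^2}^2 E_2^\top E_2$ in $\Theta_1$ then absorb the Young residues carried by the truncated part, leaving $\dot V+2\kappa V\leq C_1\Vert r\Vert_{L^2}^2$ for some $C_1>0$.

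The handling of the nonlinearity is then verbatim that of Theorem~\ref{thm1}: from $r=-\mu z z_x$, $z(t,1)=0$ and $H^1(0,1)\subset L^\infty(0,1)$ one gets $\Vert r(t,\cdot)\Vert_{L^2}^2\leq\mu^2\Vert z_x(t,\cdot)\Vert_{L^2}^4$; then, using the change of variable (\ref{eq: change of variable 3}) — whose two coefficient functions have bounded derivatives on $[0,1]$ — together with (\ref{eq: H1 norm in function of the modes}), the relation $w_n=z_n+b_n u=e_n+\hat z_n+b_n u$, and $\vert u\vert^2\leq\Vert\tilde K\Vert^2\Vert X\Vert^2\leq(\Vert\tilde K\Vert^2/\lambda_m(P))V$ (now with $u\in\R^2$, $\tilde K\in\R^{2\times 2N}$), one arrives at $\Vert z_x(t,\cdot)\Vert_{L^2}^2\leq C_3 V$, hence $\dot V+2\kappa V\leq C_4 V^2$ and $\dot V+2\delta V\leq-\{2(\kappa-\delta)-C_4 V\}V$. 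Choosing the initial data so that $V(0)<C:=2(\kappa-\delta)/C_4$ forces $\dot V+2\kappa V\leq 0$ on the maximal interval of existence, which by Remark~\ref{rmk: well-posedness} is $[0,+\infty)$, and (\ref{eq: thm3 stability estimate}) follows from (\ref{eq: change of variable 3}), (\ref{eq: H1 norm in function of the modes}) and the definition of $V$.

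Finally, feasibility for large $N$ is argued as at the end of the proof of Theorem~\ref{thm1}: $A_0+\mathfrak B_0 K+\delta I$ and $A_0-LC_0+\delta I$ are Hurwitz, $\Vert e^{(A_1+\delta I)t}\Vert\leq e^{-\kappa_0 t}$ with $\kappa_0>0$ independent of $N$, and $\Vert\tilde{\mathfrak B}_1\Vert=O(1)$, $\Vert\tilde C_1\Vert=O(1)$, so the lemma in the appendix of \cite{lhachemi2020finite} applied to $F+\delta I$ yields the solution $P\succ 0$ of $F^\top P+PF+2\delta P=-I$ with $\Vert P\Vert=O(1)$; fixing $\alpha>2/\pi^4$, taking $\beta_1=\beta_2=\sqrt N$ and $\gamma=1/N$ sends $\Theta_2\to-\infty$ and $\Theta_3\to+\infty$, while $\Vert\mathcal L_i\Vert$, $\Vert E_i\Vert$, $\Vert\tilde K_2\Vert$ stay $O(1)$ and $\gamma$, $\Vert\mathcal R_N a_2\Vert$, $\Vert\mathcal R_N b_1\Vert$, $\Vert\mathcal R_N b_2\Vert$ all vanish, so a (block) Schur complement gives $\Theta_1\prec 0$ for $N$ large. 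The part requiring the most care — and the only place where the argument genuinely departs from Theorem~\ref{thm1} — is the MIMO bookkeeping: matching each Young allocation and each output residue to the precise coefficients appearing in $\Theta_1,\Theta_2,\Theta_3$, and in particular checking that the two residues $\zeta_1,\zeta_2$ obey the same bound with the single constant $M_\phi$; the Kalman conditions underpinning the existence of $K,L$ have already been established via the $2\times 2$ determinant computations preceding the statement.
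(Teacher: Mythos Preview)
Your proposal is correct and follows precisely the route the paper intends: the paper itself does not give a separate proof of Theorem~\ref{thm3} but simply states that it ``is analogous to the one of Theorem~\ref{thm1}'', and you have carried out exactly that analogy, with the MIMO bookkeeping (four Young applications yielding the $2n^4/\alpha$ coefficient, the two residues $\zeta_1,\zeta_2$ each bounded by $M_\phi\sum_{n\geq N+1}n^4 w_n^2$ via $\vert\phi_n'(x)\vert^2\leq 2n^2\pi^2$, and the feasibility choice $\beta_1=\beta_2=\sqrt{N}$) done correctly.
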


\section{Numerical illustration}\label{sec4}

We consider the case $\lambda = 25$ and $\mu = 1$. Since $\lambda \notin\Lambda$, we can apply the result of Theorem~\ref{thm1}. Considering the feedback gain $K = 2.31$ and the observer gain $L = -34.96$, the constraints (\ref{eq: thm1 constraints}) with $\delta = 0.5$ are found feasible for a dimension of the observer $N=5$. This ensures the local exponential stability of the closed-loop system trajectories evaluated in $H^2$-norm in the sense of (\ref{eq: thm1 stability estimate}).

\section{Conclusion}\label{sec: conclusion}

This paper has addressed the problem of local output feedback stabilization of a nonlinear KSE using a Neumann boundary measurement. Depending on the values of the parameters of the plant, we discussed the choice of the actuation/sensing scheme in order to avoid the pitfall of possible unstable modes that are uncontrollable/unobservable. This led in certain cases to a multi-input multi-output control design procedure. In this context, the proposed control design strategy has been shown to be systematic. It is worth noting that even focused on pointwise Neumann measurement, the approach reported in this paper can also be applied to the less stringent case of pointwise Dirichlet measurement for PDE trajectories that can be evaluated either in $H^1$-norm or $H^2$-norm. To conclude, let us point out that the approach reported in this paper can also be easily used to locally stabilize the classical Burger equation~\cite{krstic1999global}.






\ifCLASSOPTIONcaptionsoff
  \newpage
\fi



\bibliographystyle{IEEEtranS}
\nocite{*}
\bibliography{IEEEabrv,mybibfile}

\end{document}